\documentclass[11pt]{article}

\usepackage[margin=1in]{geometry}
\usepackage{amsmath,amssymb,amsthm,mathtools}
\usepackage{enumitem}
\usepackage[hidelinks]{hyperref}

\newtheorem{theorem}{Theorem}[section]
\newtheorem{proposition}[theorem]{Proposition}
\newtheorem{lemma}[theorem]{Lemma}

\newtheorem{remark}[theorem]{Remark}
\theoremstyle{definition}
\newtheorem{example}[theorem]{Example}
\theoremstyle{plain}

\DeclareMathOperator{\gph}{gph}
\DeclareMathOperator{\dom}{dom}
\DeclareMathOperator{\ri}{ri}

\title{Maximal monotonicity of piecewise polyhedral mappings}
\author{
Xudong Li\thanks{School of Data Science, Fudan University, Shanghai, China. Email: \texttt{lixudong@fudan.edu.cn}.}
\and
R. Tyrrell Rockafellar\thanks{Department of Mathematics, University of
Washington, Seattle, WA 98195-4350, USA. Email: \texttt{rtr@uw.edu}.}
\and
Defeng Sun\thanks{Department of Applied Mathematics, The Hong Kong Polytechnic
University, Hung Hom, Hong Kong. The research of Defeng Sun was supported in part by the Hong Kong Research Grants Council under GRF Project No. 15309625. Email: \texttt{defeng.sun@polyu.edu.hk}.}
}
\date{August 26, 2026}

\begin{document}
\maketitle

\begin{abstract}
Maximal monotone mappings that are piecewise polyhedral arise from
subdifferentials of convex functions and saddle functions that are
piecewise linear-quadratic and enter into algorithmic constructions of
importance in linear-quadratic optimization and associated splitting
methods.  The question of whether those constructions preserve maximal
monotonicity is then crucial.  The usual answers to that invoke constraint
qualifications involving the nonemptiness of intersections of certain
relative interiors, but it is shown here that the piecewise polyhedral
structure allows the relative interiors to be bypassed.
\end{abstract}

\noindent\textbf{Keywords.}
Maximal monotone mappings; piecewise polyhedral multifunctions; resolvents;\\
Minty's theorem; nonexpansive mappings.

\section{Introduction}
Set-valued mappings that are maximal monotone are ubiquitous in convex
analysis and its applications to optimization and equilibrium modeling, in
particular the design and validation of solution algorithms.  Prime examples
are the subdifferential mapping associated with a closed proper convex
function and, as a special case, the normal cone mapping associated with a
nonempty closed convex set, but maximal monotonicity also arises in
connection with saddle functions and in other ways.  The fundamentals in
finite dimensions are covered in 
\cite[Chapter 12]{RockafellarWets1998} and in infinite dimensions in
\cite{BauschkeCombettes2017}.  An important issue in the
theory is how to know whether a construction, such as the addition of two
or more mappings, or some kind of composition, will preserve maximal
monotonicity.  This has been very much studied --- but there is nonetheless
a crucial gap in the finite-dimensional setting, which will be filled here.

The gap has to do with the ``constraint qualifications'' usually needed to
ensure that the monotonicity retains its maximality.  These resemble the
constraint qualifications in Lagrange multiplier theory and, in convex
optimization, involve nonempty intersections of interiors or relative
interiors of certain convex sets.   It is well known, however, that
constraint qualifications are not needed for linear constraints.  The
conjecture is that they ought not to be needed for maximal monotonicity
when dealing with the mappings associated with piecewise linear-quadratic
models \cite[Definition 10.20]{RockafellarWets1998} in optimization and equilibrium, and that is crucial because such
mappings are more prominent than ever in data science today.

The key is focusing on set-valued mappings that are {\it piecewise
polyhedral\/} in having their graphs be the union of finitely many
polyhedral convex sets.  This is true for the subdifferential of a
piecewise linear-quadratic convex function and for the normal cone mapping
associated with a polyhedral convex set, as well as for relations arising
in dual or primal-dual proximal-point schemes \cite{Rockafellar1976PPA}.
See also, for example, \cite[Propositions~12.29--12.30
and Example~12.31]{RockafellarWets1998}.  Our question about preservation
of maximality under various constructions is, in more precise terms, the
following:
\vskip.2cm
\begin{center}
	\noindent \parbox{0.92\textwidth}{
	{\it When the underlying maximal monotone mappings are piecewise polyhedral,
           can the classical relative-interior assumptions be
           replaced by plain nonempty intersection assumptions?}
	}
\end{center}
\vskip.2cm
The constructions in view go back to the convex analysis of subdifferentials
and normal cones~\cite{Rockafellar1970} and concern linear composition,
addition and argument restriction.  The existing results that utilize
relative interiors in constraint qualifications are laid out in
\cite[Theorem~12.43, Corollary~12.44, and Exercise~12.46]{RockafellarWets1998}.
The new results here confirm a positive answer to our question of
simplification.

Our confirmation centers on proving a projection theorem.  Let
\(H\) be a finite-dimensional real Hilbert space with an orthogonal
decomposition
\(
H=H_*\oplus H_{**},
\)
and let \(T:H\rightrightarrows H\) be maximal monotone with piecewise
polyhedral graph. Define \(T_*:H_*\rightrightarrows H_*\) by
\begin{equation}\label{eq:projected}
	w_*\in T_*(z_*)
	\quad\Longleftrightarrow\quad
	\exists \, z_{**}\in H_{**}\ \hbox{such that}\ (w_*,0)\in T(z_*,z_{**}).
\end{equation}
We show that if
\begin{equation}\label{eq:slice-nonempty}
	\exists \, (z_*,z_{**}, w_*)\in H_*\times H_{**}\times H_*
	\quad\hbox{such that}\quad
	(w_*,0)\in T(z_*,z_{**}),
\end{equation}
then \(T_*\) is maximal monotone.

The proof technique is resolvent-based.  It will rely on the fact that,
by Minty's parametrization \cite{Minty1962}, maximal monotonicity is
equivalent to the resolvent domain being the whole ambient space.  Since
\(T\) is maximal monotone and \(\gph T\) is piecewise polyhedral, its
resolvent
\(
  (I+T)^{-1}
\)
is a single-valued nonexpansive mapping that is piecewise affine on
$H$.  We will demonstrate that the resolvent of \(T_*\) can in this case be
described through the fixed-point condition in the eliminated component.
Then, the key step is a global fixed-point lemma for nonexpansive
piecewise affine mappings:  if this fixed-point set is nonempty, its
projection onto \(H_*\) has to be all of \(H_*\).  This directly yields the
full Minty domain and thus maximality for \(T_*\).

The projection theorem is the basic engine that powers all the other
preservation rules.  Argument restriction is handled by applying the
projection result to an inverse, while linear composition is reduced to
restriction on the range of the linear map.  The addition theorem then
follows by applying the composition result to the product mapping along
the diagonal.  Therefore, once the projection theorem is established, the
piecewise polyhedral versions of the standard preservation results follow
systematically.

The remainder of the paper is organized as follows. Section \ref{sec:preliminaries} fixes the
notation and collects the preliminary facts on projected graph slices and
resolvents. Section \ref{sec:projection} establishes the projection theorem, our main
technical result. Section \ref{sec:preservation} derives from it the preservation rules for
argument restriction, linear pullbacks, and addition.
In Section \ref{sec:conclusion}, we conclude the paper.

\section{Preliminaries}\label{sec:preliminaries}

We collect here the notation and elementary facts used in the
sequel.   
Let \(M\) be a set-valued mapping. Its graph,
domain, and range are denoted by
\[
\gph M:=\{(x,u)\mid u\in M(x)\}, \  
\dom M:=\{x\mid M(x)\neq\emptyset\}, \ \operatorname{range} M:=\{u \mid \exists\, x \mbox{ with } u \in M(x) \}.
\]
{Its inverse mapping \(M^{-1}\) is defined by
	\(
	M^{-1}(y) := \{x \mid y \in M(x)\}
	\) with $\dom M^{-1}=\operatorname{range}M$.}
Let $H$ be a finite-dimensional real Hilbert space with inner product \(\langle\cdot,\cdot\rangle\) and its induced norm \(\|\cdot\|\). 
A mapping \(F: H \rightrightarrows H\) is monotone if
\[
\langle x-y,u-v\rangle\geq 0
\quad
\text{for all }(x,u),(y,v)\in\gph F,
\]
and is maximal monotone if it is monotone and $\gph F$ is not properly
contained in the graph of any other monotone mapping on $H$. 
Throughout the paper, an orthogonal decomposition is written
as
\(
H=H_*\oplus H_{**}.
\)
The associated orthogonal projections are denoted by \(P_*\) and \(P_{**}\),
and the identity mappings on \(H\), \(H_*\), and \(H_{**}\) are denoted by
\(I\), \(I_*\), and \(I_{**}\), respectively.  The same convention will be used
with other ambient spaces when they are introduced locally in a statement.
If \(V\subset H\) is a subspace, then
\(
J_V:V\hookrightarrow H
\)
denotes the canonical inclusion, that is, \(J_Vv=v\) for every \(v\in V\).  Its adjoint is the orthogonal projection onto \(V\):
\(
J_V^*=P_V:H\to V.
\)

A subset of
a finite-dimensional real Hilbert space is called piecewise polyhedral if it is a
finite union of convex polyhedra.  A set-valued mapping is called piecewise
polyhedral when its graph is piecewise polyhedral.  For foundational continuity properties of piecewise polyhedral mappings, see \cite{Robinson1981Polyhedral} and \cite[Example 9.57]{RockafellarWets1998}. A single-valued mapping is
called piecewise affine if its domain can be covered by finitely many
polyhedra on each of which the mapping is affine.
We will repeatedly use the following elementary preservation properties of piecewise polyhedral sets.

\begin{lemma}[Elementary polyhedral calculus]\label{lem:pp-calculus}
	Finite intersections, Cartesian products, and linear images and inverse images of piecewise polyhedral sets are piecewise polyhedral.
\end{lemma}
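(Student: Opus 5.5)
The plan is to reduce each claim to the corresponding fact for single convex polyhedra and then distribute over the finite unions. Throughout, a convex polyhedron is a set of the form $\{x\mid \langle a_i,x\rangle\le b_i,\ i=1,\dots,m\}$, the empty set included. Write the given piecewise polyhedral sets as $S=\bigcup_{i=1}^p C_i$ and $S'=\bigcup_{j=1}^q D_j$ with each $C_i,D_j$ a convex polyhedron.

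\emph{Intersections and products.} For intersections we use the identity
\[
S\cap S'=\bigcup_{i,j}(C_i\cap D_j).
\]
Each $C_i\cap D_j$ is again a convex polyhedron, since we simply concatenate the two systems of inequalities. Induction then handles any finite number of sets. For products, $S\times S'=\bigcup_{i,j}C_i\times D_j$, and $C_i\times D_j$ is described by the inequalities of $C_i$ on the first coordinate block together with those of $D_j$ on the second. So it is a convex polyhedron.

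\emph{Inverse images.} Let $A$ be linear. Then $A^{-1}(S)=\bigcup_i A^{-1}(C_i)$. If $C_i=\{y\mid \langle a_k,y\rangle\le b_k\}$, then
\[
A^{-1}(C_i)=\{x\mid \langle A^*a_k,x\rangle\le b_k\},
\]
which is a convex polyhedron.

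\emph{Linear images.} Here $A(S)=\bigcup_i A(C_i)$, so it suffices to show that the linear image of a convex polyhedron is a convex polyhedron. This is the only nontrivial step. I would use the Minkowski--Weyl theorem: a convex polyhedron is exactly a set of the form $\mathrm{conv}\{v_1,\dots,v_r\}+\mathrm{cone}\{d_1,\dots,d_s\}$, plus the empty set. Linear maps preserve this form, since
\[
A(C)=\mathrm{conv}\{Av_k\}+\mathrm{cone}\{Ad_l\}.
\]
Applying Minkowski--Weyl in the converse direction shows that $A(C)$ is a convex polyhedron. As an alternative argument, one can write $A(C)$ as a coordinate projection of the polyhedron $\{(x,y)\mid x\in C,\ y=Ax\}$ and invoke Fourier--Motzkin elimination. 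A further reference is \cite[Theorem 19.3]{Rockafellar1970}.

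The only genuine obstacle is the projection/linear-image step. It is classical, and the plan is to cite it rather than reprove it. All other steps are bookkeeping with finite unions.
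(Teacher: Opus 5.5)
Your proposal is correct and matches the paper's proof. The paper also writes each set as a finite union of convex polyhedra, treats intersections and products as immediate from the polyhedral representations, and cites \cite[Theorem~19.3]{Rockafellar1970} for linear images and inverse images; your Minkowski--Weyl argument for images and the inequality description $\{x\mid\langle A^*a_k,x\rangle\le b_k\}$ for inverse images just spell out what that citation supplies.
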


\begin{proof}
	This follows by writing the sets as finite unions of convex polyhedra.
	The assertion for linear images and inverse images follows from
	\cite[Theorem~19.3]{Rockafellar1970}. The other assertions are immediate
	from the corresponding polyhedral representations.
\end{proof}

Next, we record a simple graph-projection fact.  It will be applied in the
main theorem to the projected relation \(T_*\), but the statement is written
in a general form for later reference.

\begin{proposition}[Projected graph slices]\label{prop:basic}
Let \(Z=Z_*\oplus Z_{**}\) be a finite-dimensional real Hilbert space, and let
\(M:Z\rightrightarrows Z\) be monotone and piecewise polyhedral.  Define
\(M_*:Z_*\rightrightarrows Z_*\) by
\[
  v_*\in M_*(z_*)
  \quad\Longleftrightarrow\quad
  \exists\, z_{**}\in Z_{**}\ \text{such that}\ (v_*,0)\in M(z_*,z_{**}).
\]
Then \(M_*\) is monotone and piecewise polyhedral.
\end{proposition}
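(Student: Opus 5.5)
The statement has two parts, monotonicity and piecewise polyhedrality, and I will treat them separately, since the first is a one-line inner-product computation and the second is a polyhedral-calculus bookkeeping argument based on Lemma~\ref{lem:pp-calculus}.

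\emph{Monotonicity.} Take \(v_*\in M_*(z_*)\) and \(v_*'\in M_*(z_*')\). By definition there are witnesses \(z_{**},z_{**}'\in Z_{**}\) with \((v_*,0)\in M(z_*,z_{**})\) and \((v_*',0)\in M(z_*',z_{**}')\). Monotonicity of \(M\) applied to these two graph points gives
\[
0\le \langle (z_*,z_{**})-(z_*',z_{**}'),\,(v_*,0)-(v_*',0)\rangle=\langle z_*-z_*',v_*-v_*'\rangle ,
\]
because the decomposition is orthogonal and the \(Z_{**}\)-components of the images are both zero, so the \(z_{**}\)-terms drop out. This is exactly the monotonicity inequality for \(M_*\).

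\emph{Piecewise polyhedrality.} I will express \(\gph M_*\) as a linear image of an intersection. Let \(L:=\{((z_*,z_{**}),(v_*,v_{**}))\in Z\times Z \mid v_{**}=0\}\). This is a subspace, hence a convex polyhedron and in particular piecewise polyhedral. Define the linear map \(\Pi:Z\times Z\to Z_*\times Z_*\) by \(\Pi((z_*,z_{**}),(v_*,v_{**}))=(z_*,v_*)\). Unwinding the definition of \(M_*\) gives
\[
\gph M_*=\Pi\bigl(\gph M\cap L\bigr).
\]
\(\gph M\) is piecewise polyhedral by hypothesis. Lemma~\ref{lem:pp-calculus} then shows that \(\gph M\cap L\) is piecewise polyhedral, and that its linear image under \(\Pi\) is piecewise polyhedral as well.

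No step here is a real obstacle. The only point needing care is checking that the displayed identity for \(\gph M_*\) holds in both directions, and this amounts to the existential quantifier over \(z_{**}\) being precisely what the projection \(\Pi\) implements. Note that the argument does not use maximality or any nonemptiness assumption: if the slice \(\gph M\cap L\) is empty, then \(\gph M_*\) is empty, which is trivially monotone and piecewise polyhedral as the empty union.
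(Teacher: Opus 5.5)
Your proof is correct and follows the paper's argument essentially verbatim. Monotonicity comes from applying the monotonicity of \(M\) to the two witness points, where the \(Z_{**}\)-terms vanish, and piecewise polyhedrality comes from writing \(\gph M_*\) as the linear projection of \(\gph M\) intersected with the subspace \(\{v_{**}=0\}\) and invoking Lemma~\ref{lem:pp-calculus}.
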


\begin{proof}
Let \((z_i,v_i)\in\gph M_*\), \(i=1,2\).  By definition, there are
\(z_{i,**}\in Z_{**}\) such that
\(
  (v_i,0)\in M(z_i,z_{i,**}).
\)
The monotonicity of \(M\) gives
\[
  \big\langle (z_1,z_{1,**})-(z_2,z_{2,**}),
              (v_1,0)-(v_2,0)\big\rangle\ge 0,
\]
and hence
\(
  \langle z_1-z_2,v_1-v_2\rangle\ge 0.
\)
Thus \(M_*\) is monotone. Moreover,
\[
  \gph M_*
  =\big\{(z_*,v_*):\exists\, z_{**}\in Z_{**}
       \text{ such that } (z_*,z_{**},v_*,0)\in\gph M\big\},
\]
which is the linear projection of the intersection of \(\gph M\) with the
subspace \(Z_*\oplus Z_{**}\oplus Z_*\oplus\{0\}\).  By Lemma \ref{lem:pp-calculus}, \(\gph M_*\), or equivalently \(M_*\), is piecewise polyhedral. 
\end{proof}

The next two facts are standard consequences of Minty's parametrization. Specifically, Minty's criterion
detects maximality, while the polyhedral resolvent lemma supplies the
piecewise affine structure of resolvents.  

\begin{lemma}[Minty resolvent criterion]\label{lem:resolvent-criterion}
Let \(M:Z\rightrightarrows Z\) be monotone on a finite-dimensional real Hilbert
space \(Z\).  Then, on its domain, the resolvent
\((I+M)^{-1}\) is single-valued and nonexpansive.
Moreover,
\(M\) is maximal monotone if and only if
\[
\dom(I+M)^{-1}=Z.
\]
\end{lemma}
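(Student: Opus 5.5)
The plan is to prove the two assertions separately: single-valuedness and nonexpansiveness from monotonicity alone, and then the maximality criterion.

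\emph{Single-valuedness and nonexpansiveness.} Fix \(x\in\dom(I+M)^{-1}\) and \(y\) in the same domain. Take \(z_x\in(I+M)^{-1}(x)\) and \(z_y\in(I+M)^{-1}(y)\). Then \(x-z_x\in M(z_x)\) and \(y-z_y\in M(z_y)\), so monotonicity gives
\[
\langle z_x-z_y,(x-z_x)-(y-z_y)\rangle\ge 0 .
\]
Rearranging yields \(\|z_x-z_y\|^2\le\langle z_x-z_y,x-y\rangle\). Cauchy--Schwarz then gives \(\|z_x-z_y\|\le\|x-y\|\). Taking \(x=y\) shows that any two elements of \((I+M)^{-1}(x)\) coincide, which is single-valuedness; the general case is nonexpansiveness. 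In fact this argument shows firm nonexpansiveness.

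\emph{Maximality implies full domain.} This is Minty's theorem, and it is the main obstacle. The elementary part above does not need it, and in finite dimensions one can cite \cite{Minty1962} or \cite[Theorem~12.12]{RockafellarWets1998}. If a self-contained argument is wanted, I would proceed as follows.

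\begin{enumerate}[label=(\roman*)]
\item Fix \(x\in Z\). For each finite subset \(F\subset\gph M\), apply a minimax or Brouwer argument on the convex hull of the first components of \(F\). This produces a point \(z_F\) with \(\langle z_F-a,(x-z_F)-b\rangle\ge0\) for all \((a,b)\in F\).
\item Show that the \(z_F\) are bounded. Boundedness comes from testing against a single fixed pair \((a_0,b_0)\in\gph M\), which gives a quadratic bound on \(\|z_F\|\).
\item Pass to the limit by a compactness and finite-intersection argument. This yields \(z\) with \(\langle z-a,(x-z)-b\rangle\ge0\) for every \((a,b)\in\gph M\).
\item Conclude from maximality that \((z,x-z)\in\gph M\), i.e.\ \(z\in(I+M)^{-1}(x)\).
\end{enumerate}

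In the paper's spirit I would simply invoke the classical result rather than reprove it.

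\emph{Full domain implies maximality.} This direction is easy. Suppose \((z,w)\) is monotonically related to every point of \(\gph M\), i.e.\ \(\langle z-a,w-b\rangle\ge0\) for all \((a,b)\in\gph M\). Since \(\dom(I+M)^{-1}=Z\), there is \(a\) with \(z+w\in a+M(a)\). Setting \(b=z+w-a\in M(a)\), we get \(w-b=a-z\). The monotone relation then reads \(\langle z-a,a-z\rangle\ge0\), i.e.\ \(-\|z-a\|^2\ge0\). Hence \(z=a\) and \(w=b\in M(z)\), so no proper monotone extension of \(\gph M\) exists and \(M\) is maximal monotone.
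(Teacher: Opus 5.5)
Your proposal is correct and takes essentially the paper's approach. The paper's proof is only a citation of Minty's theorem (Minty 1962; Rockafellar--Wets, Theorem~12.12), and you rely on the same result for the one nontrivial direction, maximality $\Rightarrow$ $\dom(I+M)^{-1}=Z$, while correctly proving by hand the elementary parts (firm nonexpansiveness from monotonicity, and full resolvent domain $\Rightarrow$ maximality), and your optional Brouwer/finite-intersection sketch is sound provided the finite sets $F$ in step (ii) are taken to contain the fixed pair $(a_0,b_0)$.
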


\begin{proof}
This is Minty's theorem in finite dimensions. See \cite{Minty1962} and
\cite[Theorem~12.12]{RockafellarWets1998}.
\end{proof}

\begin{lemma}[Piecewise affine resolvent]\label{lem:closed-domain}
Let \(M:Z\rightrightarrows Z\) be monotone and piecewise polyhedral on a
finite-dimensional real Hilbert space \(Z\). Then \((I+M)^{-1}\) is piecewise affine on its domain.
\end{lemma}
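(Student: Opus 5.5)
The plan is to write the graph of the resolvent as a linear image of $\gph M$, and then use single-valuedness from Lemma \ref{lem:resolvent-criterion} to pass from ``piecewise polyhedral graph'' to ``piecewise affine map''. Define the linear map $L:Z\times Z\to Z\times Z$ by $L(x,u)=(x+u,x)$. Then $z=(I+M)^{-1}(y)$ holds exactly when $y-z\in M(z)$, that is, when $(z,y-z)\in\gph M$. Hence
\[
\gph (I+M)^{-1}=L(\gph M).
\]
Write $\gph M=\bigcup_{i=1}^m C_i$ with each $C_i$ a convex polyhedron. Then $\gph(I+M)^{-1}=\bigcup_i L(C_i)$, and each $L(C_i)$ is again a convex polyhedron by Lemma \ref{lem:pp-calculus} (or \cite[Theorem~19.3]{Rockafellar1970}).

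Next, I would set $D_i:=\{y\mid \exists\, z,\ (y,z)\in L(C_i)\}$, the projection of $L(C_i)$ onto its first factor; each $D_i$ is a polyhedron. The domain of the resolvent is then $\bigcup_i D_i$. By Lemma \ref{lem:resolvent-criterion}, the resolvent $R:=(I+M)^{-1}$ is single-valued on its domain. Consequently, for each $i$, the set $L(C_i)$ is the graph of the restriction $R|_{D_i}$: every $y\in D_i$ has some $z$ with $(y,z)\in L(C_i)$, and that $z$ must equal $R(y)$.

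The main step is to show that a single-valued mapping whose graph is a convex polyhedron $G$ is affine on its domain $D$. Take $(y_1,z_1),(y_2,z_2)\in G$ and $t\in[0,1]$. By convexity, $(ty_1+(1-t)y_2,\;tz_1+(1-t)z_2)\in G$, and single-valuedness forces $R(ty_1+(1-t)y_2)=tR(y_1)+(1-t)R(y_2)$. So $R$ is affine along segments in the convex set $D$, which means $R|_D$ agrees with an affine map on the affine hull of $D$. That map can be extended to an affine map on all of $Z$. Equivalently, one can argue linear-algebraically that $G$ is the graph of a linear-plus-constant map over $\operatorname{aff} D$.

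Finally, the polyhedra $D_1,\dots,D_m$ cover $\dom R$, and $R$ is affine on each of them, which is exactly the definition of piecewise affine. The only delicate point is the midpoint-affinity argument in the main step: one has to check that convexity of the graph combined with single-valuedness really gives affinity, rather than mere convexity of the domain. That follows directly from the convex-combination identity above, so I expect no real obstacle.
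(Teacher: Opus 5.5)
Your proof is correct and follows the paper's route. You use the same linear map $(x,u)\mapsto(x+u,x)$ to carry $\gph M$ onto $\gph(I+M)^{-1}$, and the same single-valuedness from Lemma~\ref{lem:resolvent-criterion}. The only difference is at the last step: the paper cites Exercise~2.48 of Rockafellar--Wets to get piecewise affineness, whereas you prove it directly. You show that each polyhedral piece $L(C_i)$ is the graph of $R$ restricted to the polyhedron $D_i$, and that a single-valued map with a convex graph is affine there. The extension to $\operatorname{aff} D_i$ is cleanest via your linear-algebraic remark: from a relative-interior point of $L(C_i)$, any direction $(0,w)$ with $w\neq 0$ in its affine hull would contradict single-valuedness.
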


\begin{proof}
Define the linear operator 
\(
  \Phi:Z\times Z\to Z\times Z\) by \(
  \Phi(x,u):=(x+u,x).
\)
Then
\[
  \gph (I + M)^{-1}
  =\{(a,x)\mid a-x\in M(x)\}
  =\{(x+u,x)\mid (x,u)\in\gph M\}
  =\Phi(\gph M),
\]
which, together with Lemma \ref{lem:pp-calculus}, implies that \(\gph (I+M)^{-1}\), or equivalently $(I + M)^{-1}$, is piecewise
polyhedral.
By Lemma~\ref{lem:resolvent-criterion}, \((I + M)^{-1}\) is single-valued on \(
D_M:=\dom(I+M)^{-1}.
\)
It therefore follows from
\cite[Exercise~2.48]{RockafellarWets1998}
that \((I + M)^{-1}\) is piecewise affine on \(D_M\). 
\end{proof}

For a maximal monotone piecewise polyhedral mapping \(M\),
the piecewise-affine resolvent property was also established in
\cite[Theorem~3.4]{BillupsFerris1999}.

\section{The Projection Theorem}\label{sec:projection}

We now return to the projection operation introduced in \eqref{eq:projected}.
Throughout this section we fix the orthogonal decomposition
\(
  H=H_*\oplus H_{**},
\)
and use the notation introduced in Section~\ref{sec:preliminaries}.  The
mapping \(T:H\rightrightarrows H\) is assumed to be maximal monotone and
piecewise polyhedral, and \(T_*:H_*\rightrightarrows H_*\) is defined by
\eqref{eq:projected}.  We also assume the nonempty slice condition
\eqref{eq:slice-nonempty}.

By Proposition~\ref{prop:basic}, the projected relation \(T_*\) is monotone
and piecewise polyhedral.  Thus maximality is the only remaining issue.  We
prove it by passing to Minty coordinates.  The maximality of \(T\) gives the
nonexpansive resolvent \cite[Theorem~12.12]{RockafellarWets1998}
\[
  R:=(I+T)^{-1}:H\to H.
\]
Then Lemma~\ref{lem:closed-domain} implies that $R$ is
piecewise affine on \(H\).  The next lemma identifies the resolvent domain of
\(T_*\) as a projected fixed-point set for \(R\).

\begin{lemma}[Projected resolvent representation]
\label{lem:projected-resolvent}
Let
\(
 R_*:=(I_*+T_*)^{-1}.
\)
For \(a_*,x_*\in H_*\), 
$x_*=R_*a_*$ if and only if there exists \(b_{**}\in H_{**}\) such that $R(a_*,b_{**})=(x_*,b_{**}).$
Consequently,
\begin{equation}\label{eq:D-star-fixed-point}
  \dom R_*
  =P_*
  \Big\{
    (a_*,b_{**})\in H_*\oplus H_{**}
    \;\Big|\;
    P_{**}R(a_*,b_{**})=b_{**}
  \Big\}.
\end{equation}
\end{lemma}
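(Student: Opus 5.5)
The plan is to unwind both resolvents into graph statements and compare them directly, so no analytic machinery is needed beyond the definitions.

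First, describe $R_*$. By definition of the inverse, $x_*=R_*a_*$ (equivalently $x_*\in R_*a_*$, since Lemma~\ref{lem:resolvent-criterion} makes $R_*$ single-valued on its domain) holds if and only if $a_*-x_*\in T_*(x_*)$. By \eqref{eq:projected}, this means there is some $z_{**}\in H_{**}$ with
\[
(a_*-x_*,\,0)\in T(x_*,z_{**}).
\]

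Second, describe $R$. Since $T$ is maximal monotone, $R$ is single-valued on all of $H$. Hence $R(a_*,b_{**})=(x_*,z_{**})$ if and only if $(a_*,b_{**})-(x_*,z_{**})\in T(x_*,z_{**})$, that is,
\[
(a_*-x_*,\,b_{**}-z_{**})\in T(x_*,z_{**}).
\]

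Now match the two descriptions. If $x_*=R_*a_*$, take the $z_{**}$ from the first step and put $b_{**}:=z_{**}$. Then the second component $b_{**}-z_{**}$ vanishes, the inclusion in the second step holds, and so $R(a_*,b_{**})=(x_*,b_{**})$. Conversely, suppose $R(a_*,b_{**})=(x_*,b_{**})$. Apply the second step with $z_{**}=b_{**}$: it gives $(a_*-x_*,0)\in T(x_*,b_{**})$, which is exactly the first step's condition with $z_{**}=b_{**}$. Hence $x_*=R_*a_*$. Single-valuedness of $R$ is used here only so that the equation $R(a_*,b_{**})=(x_*,b_{**})$ is meaningful.

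For \eqref{eq:D-star-fixed-point}, note that $a_*\in\dom R_*$ if and only if some $x_*$ and $b_{**}$ satisfy $R(a_*,b_{**})=(x_*,b_{**})$. Given $b_{**}$, such an $x_*$ exists precisely when $P_{**}R(a_*,b_{**})=b_{**}$, and then $x_*=P_*R(a_*,b_{**})$. So $\dom R_*$ is the image under $P_*$ of the set $\{(a_*,b_{**})\mid P_{**}R(a_*,b_{**})=b_{**}\}$.

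The only point needing care is the role of the auxiliary variable. The $z_{**}$ in \eqref{eq:projected} is a free witness, and in the resolvent of $T$ it has to be identified with the eliminated coordinate $b_{**}$ of the input. Once that identification is made, both directions are immediate.
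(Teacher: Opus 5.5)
Your proposal is correct and follows essentially the paper's argument: unwind $x_*=R_*a_*$ as $(a_*-x_*,0)\in T(x_*,b_{**})$ for some witness $b_{**}$, add $(x_*,b_{**})$ to get $R(a_*,b_{**})=(x_*,b_{**})$, reverse the steps, and eliminate $x_*=P_*R(a_*,b_{**})$ to obtain \eqref{eq:D-star-fixed-point}. You also state explicitly that $R_*$ is single-valued (because $T_*$ is monotone) and that $R$ is single-valued; the paper leaves both points implicit, so this adds a little precision.
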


\begin{proof}
The condition \(x_*=R_*a_*\) means
\(
  a_*-x_*\in T_*(x_*).
\)
By the definition of \(T_*\), this is equivalent to the existence of
\(b_{**}\in H_{**}\) such that
\(
  (a_*-x_*,0)\in T(x_*,b_{**}).
\)
Adding \((x_*,b_{**})\) gives
\(
  (a_*,b_{**})\in (I+T)(x_*,b_{**}),
\)
which is equivalent to
\[
  R(a_*,b_{**})=(x_*,b_{**}).
\]
The converse follows by reversing these implications.  Eliminating \(x_*\)
gives \eqref{eq:D-star-fixed-point}.
\end{proof}

From Lemmas \ref{lem:resolvent-criterion} and \ref{lem:projected-resolvent}, we see that to show the maximality of $T_*$, it suffices to show that for every \(a_*\in H_*\), there exists \(b_{**}\in H_{**}\) such that
\[
P_{**}R(a_*,b_{**})=b_{**}.
\]
Thus, the maximality of \(T_*\) is reduced to a fixed-point statement for nonexpansive piecewise affine mappings. This is precisely where the piecewise polyhedral structure enters the argument. 
For fixed \(a_*\), piecewise
affineness makes the range of the displacement mapping
\(
I_{**}-P_{**}R(a_*,\cdot)
\)
a finite union of polyhedra and hence closed. The lemma below shows
that this displacement mapping is maximal monotone, so its range is
nearly convex. These properties allow us to invoke a separation argument, leading to the desired fixed-point claim.

\begin{lemma}[Global projected fixed-point lemma]
\label{lem:global-projected-fixed-point}
Let \(X\) and \(Y\) be finite-dimensional real Hilbert spaces, and let
\(\Omega:X\oplus Y\to X\oplus Y\) be nonexpansive and piecewise affine with $\dom \Omega = X\oplus Y$.
Define
\[
  K:=\{(a,b)\in X\oplus Y\mid P_Y\Omega(a,b)=b\}.
\]
If \(K\ne\emptyset\), then
\[
  P_XK=X.
\]
\end{lemma}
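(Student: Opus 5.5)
The plan is to work fiberwise in \(a\). For each \(a\in X\), I would study the displacement mapping \(G_a:Y\to Y\), \(G_a(b):=b-P_Y\Omega(a,b)\), and the set \(C_a:=\operatorname{range}G_a\). Then \(a\in P_XK\) if and only if \(0\in C_a\), so the claim \(P_XK=X\) becomes \(0\in C_a\) for every \(a\). Since \(X\) is connected and \(P_XK\neq\emptyset\), it suffices to show that \(P_XK\) is both closed and open in \(X\).

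The first step collects the structure of \(G_a\) and \(C_a\).
\begin{enumerate}[label=(\roman*)]
\item \(G_a\) is maximal monotone. The map \(b\mapsto P_Y\Omega(a,b)\) is nonexpansive on \(Y\), because \(P_Y\) is nonexpansive and \(\Omega\) is nonexpansive. Hence \(I_Y-P_Y\Omega(a,\cdot)\) is monotone: \(\langle b-b',G_a(b)-G_a(b')\rangle\ge\|b-b'\|^2-\|b-b'\|\,\|P_Y\Omega(a,b)-P_Y\Omega(a,b')\|\ge0\). It is also continuous with full domain, so it is maximal monotone.
\item \(C_a\) is closed. Since \(\Omega\) is piecewise affine, \(G_a\) is piecewise affine on \(Y\), so \(C_a\) is a finite union of polyhedra, which is closed.
\item \(C_a\) is convex. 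The range of a maximal monotone map is nearly convex, and a closed nearly convex set is convex.
\item The family depends on \(a\) in a 1-Lipschitz way. We have \(\|G_a(b)-G_{a'}(b)\|\le\|a-a'\|\), so every point of \(C_a\) lies within \(\|a-a'\|\) of \(C_{a'}\). Consequently \(\varphi(a):=\operatorname{dist}(0,C_a)\) is 1-Lipschitz.
\end{enumerate}

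Closedness of \(P_XK\) then follows without difficulty. The set \(K\) is the preimage of \(0\) under the piecewise affine map \((a,b)\mapsto G_a(b)\), so it is piecewise polyhedral. Its linear image \(P_XK\) is piecewise polyhedral by Lemma~\ref{lem:pp-calculus}, and hence closed.

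The main obstacle is openness: I must show that \(0\in C_{a_0}\) forces \(0\in C_a\) for all \(a\) near \(a_0\). I would first try a separation and recession argument. Suppose \(0\notin C_{a_1}\) for some \(a_1\). By convexity and closedness of \(C_{a_1}\), there are a unit vector \(d\) and \(\delta>0\) with \(\langle d,G_{a_1}(b)\rangle\ge\delta\) for all \(b\in Y\). Piecewise affineness gives a positively homogeneous horizon map \(\Omega^\infty(v):=\lim_{t\to\infty}\Omega(tv)/t\), which is independent of base point because \(\Omega\) is globally Lipschitz. Hence \(G_a(b+tu)/t\to u-P_Y\Omega^\infty(0,u)\), with a limit independent of \(a\). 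Also, the recession cone of the polyhedral convex set \(C_a\) is generated by the recession data of the linear parts of the pieces, which do not depend on \(a\).

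I then want to show that the separating half-space for \(C_{a_1}\) persists along the segment from \(a_1\) to a point of \(P_XK\). Restrict to the segment \(a(t)=a_0+t(a_1-a_0)\). The set \(\{t\mid0\in C_{a(t)}\}\) is a finite union of closed intervals, since it is piecewise polyhedral in one dimension. Take an endpoint \(t^*\) beyond which \(0\notin C_{a(t)}\). On a final polyhedral piece of the graph \(\{(t,c)\mid c\in C_{a(t)}\}\), the sets \(C_{a(t)}\) vary affinely in \(t\).

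The contradiction I aim for is with the monotonicity of \((a,b)\mapsto(a,b)-\Omega(a,b)\), which is monotone because \(\Omega\) is nonexpansive. Applying this monotonicity to points of \(K\) at \(t^*\) and to minimizers of \(\langle d,G_{a(t)}(\cdot)\rangle\) for \(t>t^*\) should show that \(\varphi\) cannot leave zero at \(t^*\). This last step is where I expect the real work to lie.
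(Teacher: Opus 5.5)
The gap is the openness step. Your items (i)--(iii), the closedness of $P_XK$, and the separation inequality $\langle d,G_{a_1}(b)\rangle\ge\delta$ for all $b\in Y$ are all correct, and they are exactly the ingredients the paper uses. But in your closed-and-open decomposition, openness carries the entire content of the lemma, and you leave it at the level of a hope (``should show that $\varphi$ cannot leave zero at $t^*$'').

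Moreover, the tool you plan to close it with cannot suffice. On $X=Y=\mathbb{R}$ take $\Omega(a,b)=(a+b,\,b-a)$.
\begin{itemize}
\item It is linear, hence piecewise affine.
\item $I-\Omega$ is the skew, hence monotone, map $(a,b)\mapsto(-b,a)$.
\item $Q_a(b)=b-a$ is nonexpansive in $b$, and $G_a\equiv a$.
\end{itemize}
So (i)--(iv), $K\neq\emptyset$, and monotonicity of $I-\Omega$ all hold. Yet $K=\{0\}\times\mathbb{R}$, and $P_XK=\{0\}$ is not open. What fails is that this $\Omega$ is only $\sqrt{2}$-Lipschitz. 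Any successful argument must therefore use nonexpansiveness of $\Omega$ jointly in $(a,b)$, not merely fiberwise nonexpansiveness of $Q_a$ or monotonicity of $I-\Omega$. The horizon-map idea does not rescue this either: dividing by $t$ discards the constant offset $\delta$ on which any contradiction must rest.

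The missing idea is that no local or connectedness argument is needed at all. Once $0$ is strictly separated from $C_{a_1}$, take any $(\bar a,\bar b)\in K$, with $\bar a$ possibly far from $a_1$, and move along the ray $b_t:=\bar b-td$.
\begin{itemize}
\item Because $Q_{a_1}(b_t)-\bar b=-td-G_{a_1}(b_t)$, pairing with the unit vector $-d$ gives $\|Q_{a_1}(b_t)-\bar b\|\ge t+\delta$.
\item Because $P_Y\Omega(\bar a,\bar b)=\bar b$ and $\Omega$ is nonexpansive on $X\oplus Y$, $\|Q_{a_1}(b_t)-\bar b\|\le\|(a_1,b_t)-(\bar a,\bar b)\|=\sqrt{\|a_1-\bar a\|^2+t^2}$.
\end{itemize}
The right-hand side of the second bound is smaller than $t+\delta$ for large $t$, a contradiction. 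This holds for every $a_1\in X$, so $P_XK=X$ follows at once. This is the paper's proof, and it makes item (iv), the closed/open decomposition, and the segment analysis unnecessary.
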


\begin{proof}
Choose \((\bar a,\bar b)\in K\).  Thus
\(
  P_Y\Omega(\bar a,\bar b)=\bar b.
\)
Fix an arbitrary \(a\in X\).  We prove that there exists \(b\in Y\) such that
\(P_Y\Omega(a,b)=b\).  Define $Q_a:Y\to Y$ by $Q_a(b):=P_Y\Omega(a,b)$, and set
\(
  G_a:=I_Y-Q_a.
\)
Then, $\dom \Omega = X\oplus Y$ implies that $\dom Q_a = \dom G_a = Y$.

Since \(P_Y\) and \(\Omega\) are nonexpansive, \(Q_a\) is
nonexpansive. For any 
\(b_1,b_2\in Y\), 
we have
\[
\begin{aligned}
&2\langle G_a(b_1)-G_a(b_2),b_1-b_2\rangle
 -\|G_a(b_1)-G_a(b_2)\|^2\\
={}&\|b_1-b_2\|^2-\|Q_a(b_1)-Q_a(b_2)\|^2
\ge 0.
\end{aligned}
\]
Hence, $G_a$ is continuous and monotone on \(\operatorname{dom}G_a=Y\). Then, by \cite[Example 12.7]{RockafellarWets1998}, \(G_a\) is maximal monotone on \(Y\). 
Moreover, since \(\Omega\) is piecewise affine, Lemma \ref{lem:pp-calculus},  \cite[Exercise~2.48]{RockafellarWets1998} and the definition of \(G_a\) imply that \(G_a\) is also piecewise affine.
Thus
\(
  C:=\operatorname{range} G_a
\)
is a finite union of polyhedra and therefore is closed.  By \cite{Rockafellar1970Virtual} and \cite[Theorem~12.41]{RockafellarWets1998}, 
the maximal monotonicity of \(G_a\) implies that \(C\) is
nearly convex, in the sense that there exists a convex set \(C_0\subset Y\) such that
\(
  C_0\subset C\subset \operatorname{cl} C_0.
\)
The closedness of \(C\) then implies that 
\(
  C=\operatorname{cl} C_0.
\)
Since $C_0$ is convex, we have from \cite[Theorem 6.2]{Rockafellar1970} that \(C\) is also convex.

Suppose, for contradiction, that \(Q_a\) has no fixed point.  Then
\(
0\notin C.
\)
By the strong separation theorem \cite[Section~11]{Rockafellar1970}, there exist \(v\in Y\setminus\{0\}\) and
\(\gamma>0\) such that
\begin{equation}\label{eq:range-separation}
	\langle c,v\rangle\ge \gamma
	\qquad \forall c\in C .
\end{equation}
Since \(G_a(b)\in C\) for every \(b\in Y\), this yields
\begin{equation}\label{eq:Ga-separation}
	\langle G_a(b),v\rangle\ge \gamma
	\qquad \forall b\in Y .
\end{equation}
For \(t>0\), set
\(
e:=\frac{v}{\|v\|}\) and 
\(
b_t:=\bar b-te .
\)
Using \(G_a=I_Y-Q_a\), we have
\[
Q_a(b_t)-\bar b
=
b_t-\bar b-G_a(b_t)
=
-te-G_a(b_t).
\]
Therefore, by \eqref{eq:Ga-separation},
\[
\begin{aligned}
	\|Q_a(b_t)-\bar b\|
	\ge
	\left\langle Q_a(b_t)-\bar b,-e\right\rangle 
	=
	t+\frac{\langle G_a(b_t),v\rangle}{\|v\|}  \ge
	t+\frac{\gamma}{\|v\|}.
\end{aligned}
\]
On the other hand, because \(\Omega\) is nonexpansive and
\(P_Y\Omega(\bar a,\bar b)=\bar b\),
\[
\begin{aligned}
	\|Q_a(b_t)-\bar b\|
	&=
	\|P_Y\Omega(a,b_t)-P_Y\Omega(\bar a,\bar b)\|  \\
	&\le
	\|\Omega(a,b_t)-\Omega(\bar a,\bar b)\|  \\
	&\le
	\|(a,b_t)-(\bar a,\bar b)\|  =
	\sqrt{\|a-\bar a\|^2+t^2}.
\end{aligned}
\]
However, for all sufficiently large \(t\), it holds that
\[
\sqrt{\|a-\bar a\|^2+t^2}
<
t+\frac{\gamma}{\|v\|},
\]
which contradicts the two preceding estimates.  Hence \(Q_a\) has a fixed
point. Thus, there exists \(b\in Y\) such that \(P_Y\Omega(a,b)=b\).  Since
\(a\in X\) was arbitrary, \(P_XK=X\).
\end{proof}

We can now complete the proof of the projection theorem.

\begin{theorem}[Projected maximal monotonicity]\label{thm:projection}
Let \(T:H\rightrightarrows H\) be maximal monotone and piecewise polyhedral,
and suppose that the slice condition \eqref{eq:slice-nonempty} holds.  Then
the projected relation \(T_*\) defined in \eqref{eq:projected} is maximal
monotone on \(H_*\) and piecewise polyhedral.
\end{theorem}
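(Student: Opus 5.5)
The proof assembles the pieces already in place. By Proposition~\ref{prop:basic}, applied with \(Z=H\), \(Z_*=H_*\), \(Z_{**}=H_{**}\) and \(M=T\), the relation \(T_*\) is monotone and piecewise polyhedral. Only maximality remains, and I will establish it through Lemma~\ref{lem:resolvent-criterion}. That lemma reduces the task to showing
\[
\dom R_* = H_*, \qquad R_*:=(I_*+T_*)^{-1}.
\]

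First, because \(T\) is maximal monotone, Lemma~\ref{lem:resolvent-criterion} gives that \(R=(I+T)^{-1}\) is single-valued and nonexpansive with \(\dom R=H\). Lemma~\ref{lem:closed-domain} then gives that \(R\) is piecewise affine on \(H\).

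Second, I apply Lemma~\ref{lem:global-projected-fixed-point} with \(X=H_*\), \(Y=H_{**}\) and \(\Omega=R\). The set \(K\) of that lemma then coincides with
\[
\{(a_*,b_{**}) \mid P_{**}R(a_*,b_{**})=b_{**}\},
\]
and representation \eqref{eq:D-star-fixed-point} of Lemma~\ref{lem:projected-resolvent} says exactly that \(\dom R_*=P_*K\).

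Third, I check that \(K\neq\emptyset\) using the slice condition \eqref{eq:slice-nonempty}. It supplies \((z_*,z_{**},w_*)\) with \((w_*,0)\in T(z_*,z_{**})\), so \(w_*\in T_*(z_*)\). Setting \(a_*:=z_*+w_*\), we have \(a_*-z_*\in T_*(z_*)\), hence \(z_*=R_*a_*\). By Lemma~\ref{lem:projected-resolvent} there is some \(b_{**}\) with \(R(a_*,b_{**})=(z_*,b_{**})\); indeed \(b_{**}=z_{**}\) works directly, since \((a_*,z_{**})\in(I+T)(z_*,z_{**})\). Therefore \((a_*,z_{**})\in K\).

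Lemma~\ref{lem:global-projected-fixed-point} now yields \(P_*K=H_*\), so \(\dom R_*=H_*\), and Lemma~\ref{lem:resolvent-criterion} gives maximal monotonicity of \(T_*\). The substantive difficulty, namely the fixed-point argument that relies on near-convexity and closedness of the range of the displacement map, was already handled in Lemma~\ref{lem:global-projected-fixed-point}. The only step needing care here is matching the hypotheses: \(R\) must have full domain and be piecewise affine, and the slice condition must produce a point of \(K\).
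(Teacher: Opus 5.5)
Your proposal is correct and matches the paper's proof step for step. Proposition~\ref{prop:basic} gives monotonicity and piecewise polyhedrality, Lemma~\ref{lem:projected-resolvent} gives $\dom R_*=P_*K$, the point $(z_*+w_*,z_{**})$ shows $K\neq\emptyset$, and Lemma~\ref{lem:global-projected-fixed-point} with $\Omega=R$ together with Minty's criterion finishes the argument; your detour through $R_*$ when showing $K\neq\emptyset$ is unnecessary but harmless, since the direct observation $R(z_*+w_*,z_{**})=(z_*,z_{**})$ you also give is exactly what the paper uses.
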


\begin{proof}
By Proposition~\ref{prop:basic}, \(T_*\) is monotone and piecewise
polyhedral.  Let
\(
  R_*:=(I_*+T_*)^{-1}\) and \(  D_*:=\dom R_*.\)
By Lemma~\ref{lem:projected-resolvent},
\[
  D_*=P_*K,
  \qquad
  K=\big\{(a_*,b_{**})\mid P_{**}R(a_*,b_{**})=b_{**}\big\},
\]
where \(R=(I+T)^{-1}\). The slice condition \eqref{eq:slice-nonempty} implies \(K\ne\emptyset\).
Indeed, if \((w_*,0)\in T(z_*,z_{**})\), then
\(
  R(z_*+w_*,z_{**})=(z_*,z_{**}),
\)
and hence \((z_*+w_*,z_{**})\in K\). Since \(T\) is maximal monotone and piecewise polyhedral, \cite[Theorem 12.12]{RockafellarWets1998} and Lemma~\ref{lem:closed-domain}
show that \(R\) is nonexpansive and piecewise affine with $\dom R = H$. Applying
Lemma~\ref{lem:global-projected-fixed-point} with
\(
  X=H_*\), \(Y=H_{**}\), and $\Omega = R$,
we obtain
\(
  D_*=P_*K=H_*.
\)
By Proposition \ref{prop:basic} and Lemma~\ref{lem:resolvent-criterion}, \(T_*\) is maximal
monotone and piecewise polyhedral.
\end{proof}

\section{Preservation Operations}\label{sec:preservation} 

The projection theorem leads to the following preservation rules.  They are
piecewise polyhedral counterparts of the classical results in \cite{Rockafellar1970Sum} and
\cite[Theorem~12.43, Corollary~12.44, and Exercise~12.46]{RockafellarWets1998}. Indeed,
the relative-interior constraint qualifications appearing there are replaced
below by nonempty intersection conditions.

\begin{theorem}
\label{thm:preservation}
Let \(M:H\rightrightarrows H\) be maximal monotone and piecewise polyhedral. The following assertions hold.
\begin{enumerate}[label=\textup{(\alph*)},leftmargin=*]
\item \textbf{Restriction of arguments.}
Let \(V\) be a subspace of \(H\), and recall the canonical
inclusion \(J_V\) defined in Section~\ref{sec:preliminaries}.  If
\[
  V\cap\dom M\ne\emptyset,
\]
then
\[
  J_V^*MJ_V:V\rightrightarrows V
\]
is maximal monotone and piecewise polyhedral.

\item \textbf{Composition with a linear map.}
Let \(L:H'\to H\) be linear, where $H'$ is a finite-dimensional real Hilbert space.  If
\[
  \operatorname{range} L \cap\dom M\ne\emptyset,
\]
then
\[
  L^*ML:H'\rightrightarrows H'
\]
is maximal monotone and piecewise polyhedral, where $L^*$ is the adjoint of $L$.

\item \textbf{Addition.}
Let \(M_1,M_2:H\rightrightarrows H\) be maximal monotone and piecewise
polyhedral.  If
\[
  \dom M_1\cap\dom M_2\ne\emptyset,
\]
then
\[
  M_1+M_2: H\rightrightarrows H
\]
is maximal monotone and piecewise polyhedral.
\end{enumerate}
\end{theorem}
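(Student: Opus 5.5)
We would derive all three parts from Theorem~\ref{thm:projection}. The projection theorem is the only real engine; the remaining work is reformulating each construction as a projected relation of some maximal monotone, piecewise polyhedral mapping. Two standard facts are used throughout: inverses of maximal monotone mappings are maximal monotone, and Cartesian products of maximal monotone mappings are maximal monotone. Piecewise polyhedrality always follows from Lemma~\ref{lem:pp-calculus}, since each graph below is a linear image or preimage of the original graphs intersected with subspaces.

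For (a), set $T:=M^{-1}$, which is maximal monotone and piecewise polyhedral. Take $H_*:=V$ and $H_{**}:=V^\perp$. By \eqref{eq:projected}, $w_*\in T_*(z_*)$ means that there is $z_{**}\in V^\perp$ with $z_*+z_{**}\in M(w_*)$ and $w_*\in V$. In other words, $w_*\in V$ and $z_*\in P_VM(w_*)$. Hence $T_*=(J_V^*MJ_V)^{-1}$. The slice condition \eqref{eq:slice-nonempty} asks for $w_*\in V$ and $u\in M(w_*)$, with $z=u$ decomposed along $V\oplus V^\perp$. This is exactly $V\cap\dom M\ne\emptyset$. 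Theorem~\ref{thm:projection} then gives maximality of $T_*$, and inverting gives (a).

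For (b), let $V:=\operatorname{range}L$ and factor $L=J_VA$ with $A:H'\to V$ surjective. Then $L^*ML=A^*NA$, where $N:=J_V^*MJ_V$ is maximal monotone and piecewise polyhedral on $V$ by (a). Decompose $H'=\ker A\oplus(\ker A)^\perp$, and let $B:=A|_{(\ker A)^\perp}$, which is a linear isomorphism onto $V$. For $x=x_0+x_1$ we get $A^*NA(x)=B^*NB(x_1)\subset(\ker A)^\perp$, independent of $x_0$. Thus $\gph(A^*NA)$ is the product of $\gph(B^*NB)$ with $\gph$ of the zero map on $\ker A$. The zero map is maximal monotone.

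The step requiring care is that $B^*NB$ is maximal monotone. The linear bijection $(x,u)\mapsto(B^{-1}x,B^*u)$ carries $\gph N$ onto $\gph(B^*NB)$. It preserves the monotonicity pairing exactly: $\langle B^{-1}x-B^{-1}y,B^*u-B^*v\rangle=\langle x-y,u-v\rangle$. It likewise carries arbitrary monotone graphs to monotone graphs, in both directions. Therefore any proper monotone extension of $\gph(B^*NB)$ would pull back to a proper monotone extension of $\gph N$, which is impossible. The product of two maximal monotone mappings is maximal monotone, which yields (b). This bookkeeping in (b) is the main place where an error could slip in, so we would double-check that the product decomposition of $H'$ is orthogonal and that the pairing is preserved.

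For (c), apply (b) with $H$ replaced by $H\times H$, the mapping $M_1\times M_2$ (maximal monotone and piecewise polyhedral), and $L:H\to H\times H$, $Lx=(x,x)$. Then $L^*(u_1,u_2)=u_1+u_2$, so $L^*(M_1\times M_2)L=M_1+M_2$. Moreover, $\operatorname{range}L\cap(\dom M_1\times\dom M_2)\ne\emptyset$ holds exactly when $\dom M_1\cap\dom M_2\ne\emptyset$.
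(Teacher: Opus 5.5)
Your proposal is correct. Parts (a) and (c) follow the paper's proof essentially verbatim: in (a) you take $T=M^{-1}$ with $H_*=V$ and $H_{**}=V^\perp$, and in (c) you apply (b) to the product mapping via the diagonal embedding into $H\times H$.

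In (b) you take a genuinely different route. The two arguments agree up to the reduction $L^*ML=A^*NA$, where $A$ is surjective onto $V=\operatorname{range}L$ and $N=J_V^*MJ_V$ is maximal monotone by (a). The paper then proceeds in three steps:
\begin{enumerate}[label=(\roman*)]
\item it invokes near convexity of $\dom N$ (the virtual convexity theorem, Theorem~12.41 in Rockafellar--Wets) to get $\ri(\dom N)\neq\emptyset$;
\item it observes that surjectivity makes the classical qualification $\operatorname{range}A\cap\ri(\dom N)\neq\emptyset$ automatic;
\item it cites the classical composition result, Theorem~12.43 in Rockafellar--Wets.
\end{enumerate}
You instead prove directly that composition with a surjective linear map preserves maximal monotonicity. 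The splitting $H'=\ker A\oplus(\ker A)^\perp$ exhibits $A^*NA$ as the product of the zero map on $\ker A$ with $B^*NB$. The pairing-preserving linear bijection $(x,u)\mapsto(B^{-1}x,B^*u)$ then transports maximality from $N$ to $B^*NB$.

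Your version is self-contained and needs neither near convexity of domains nor any relative-interior machinery. It also shows that for a surjective map no qualification of any kind is needed, for an arbitrary maximal monotone $N$. The cost is a few extra lines of bookkeeping. The paper's version is shorter on the page. It also makes explicit the conceptual point that, after restricting to $V=\operatorname{range}L$, the classical relative-interior condition holds for free.
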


\begin{proof}

For (a), by Lemma \ref{lem:pp-calculus} and \cite[Exercise 12.8]{RockafellarWets1998}, we note that $T =M^{-1}$ is maximal monotone and piecewise polyhedral. Split \(H=V\oplus V^\perp\). Since \(V\cap\dom M\ne\emptyset\), choose
\(w\in V\cap\dom M\) and \(z\in M(w)\).
Write
\(
z=(x,u_\perp)\in V\oplus V^\perp.
\)
Then
\[
(w,0)\in M^{-1}(x,u_\perp)=T(x,u_\perp),
\]
so the slice condition \eqref{eq:slice-nonempty} holds for \(T\). By Theorem~\ref{thm:projection}, the projected relation \(T_*\) is maximal monotone and piecewise polyhedral.
Note that for any \(x,w\in V\),
\[
  w\in T_*(x)
  \Longleftrightarrow
  \exists u_\perp\in V^\perp\ \text{such that}\ (w,0)\in M^{-1}(x,u_\perp)
  \Longleftrightarrow
  w\in (J_V^*MJ_V)^{-1}(x).
\]
Thus, \(J_V^*MJ_V\) is the inverse of \(T_*\), and therefore is maximal monotone and piecewise
polyhedral.

For part~\textup{(b)}, let \(V:= \operatorname{range} L\) and
\(J_V:V\hookrightarrow H\) be the canonical inclusion, and define the surjective linear mapping
\(
L_V:H'\to V
\) by $L_V x:=Lx$ for $x\in H'$.
By part~\textup{(a)}, the mapping
\(
M_V:=J_V^*MJ_V
\)
is maximal monotone and piecewise polyhedral.
The maximal monotonicity of \(M_V\) implies that \(\dom M_V\) is
nearly convex by \cite{Rockafellar1970Virtual} and \cite[Theorem~12.41]{RockafellarWets1998}, which, together with \( \dom M_V = V \cap \dom M\ne\emptyset\) and \cite[Theorem 6.2]{Rockafellar1970}, yields
\[
\emptyset \ne \ri(\dom M_V) \subset \dom M_V \subset V.
\]
Moreover, the surjectivity of \(L_V\) gives
\(\operatorname{range}L_V=V\). Hence the classical
relative-interior qualification
\(
\operatorname{range}L_V\cap\ri(\dom M_V)\ne\emptyset
\)
holds. Consequently, \(L_V^*M_VL_V\) is maximal monotone \cite[Theorem~12.43]{RockafellarWets1998}. Finally,
\(L =J_VL_V\), and therefore for all \(x\in H'\),
\[
L^*ML(x)
=L_V^*J_V^*MJ_VL_V(x)
=L_V^*M_VL_V(x).
\]
The piecewise polyhedrality of $L^*ML$ follows from
Lemma~\ref{lem:pp-calculus} and the fact that
\(
\gph(L^*ML) = \{(x,L^*u) \mid (Lx, u) \in \gph M\}.
\)

For \textup{(c)}, define
\(
\mathcal M(x_1,x_2):=M_1(x_1)\times M_2(x_2)
\) for \((x_1,x_2)\in H\times H\). Since $M_1,M_2$ are maximal monotone, we have that \(\mathcal M\) is monotone, and 
\[
(I +\mathcal M)^{-1}(x_1, x_2)=((I+M_1)^{-1}(x_1),(I+M_2)^{-1}(x_2)), \quad \forall\, (x_1,x_2)\in H\times H.
\]
Hence, $\dom(I +\mathcal M)^{-1} = H\times H$, which, by Lemma~\ref{lem:resolvent-criterion}, implies that \(\mathcal M\) is maximal monotone. It is piecewise polyhedral because of Lemma~\ref{lem:pp-calculus} and the fact that $\gph \mathcal M$ is the image of
\(\gph M_1\times\gph M_2\) under a coordinate permutation.
Now define the diagonal embedding
\(
L:H\to H\times H\) by 
\(
Lx=(x,x)
\) for $x\in H$, with the adjoint
\(
L^*(u_1,u_2)=u_1+u_2
\) for $(u_1,u_2)\in H\times H$.
Since \(
\dom M_1\cap\dom M_2\ne\emptyset,
\) we have from the definitions of $L$ and ${\cal M}$ that
\(
\operatorname{range}L \cap\dom\mathcal M\ne\emptyset .
\)
Thus,
part \textup{(b)} asserts that $L^*{\cal M}L$ is maximal monotone and piecewise polyhedral.
Finally, for every \(x\in H\),
\[
\begin{aligned}
	L^*\mathcal M L(x)
	=\{u_1+u_2\mid u_1\in M_1(x),\ u_2\in M_2(x)\}
	=(M_1+M_2)(x).
\end{aligned}
\]
Hence, \(M_1+M_2\) is maximal monotone and piecewise
polyhedral.
\end{proof}

\begin{remark}
  The linear composition result also has an immediate affine extension.  Indeed, given a  maximal monotone and piecewise polyhedral mapping $M$, a linear mapping \(L:H'\to H\) and \(h\in H\), if
\[
  (\operatorname{range} L + h) \cap\dom M\ne\emptyset,
\]
then the mapping
\(
  x\mapsto L^*M(Lx+h)
\)
is maximal monotone and piecewise polyhedral.  To see this, apply
part~\textup{(b)} to the translated mapping
\(
  M_h(z):=M(z+h),
\)
which is again maximal monotone and piecewise polyhedral.
\end{remark}

The following elementary example illustrates the flexibility of the nonempty-intersection conditions in Theorem~\ref{thm:preservation} compared to the classical relative-interior conditions.
\begin{example}
\label{ex:ri-failure}
For a nonempty closed convex set \(C\), let \(N_C\) denote its normal-cone
mapping. Set
\(
  \mathbb R_-:=(-\infty,0]\) and \(\mathbb R_+:=[0,\infty).
\)
Both mappings $N_{\mathbb R_-}$ and $N_{\mathbb R_+}$ are maximal monotone and piecewise polyhedral.
Since
\[
  \ri(\dom N_{\mathbb R_-})=(-\infty,0),\qquad
  \ri(\dom N_{\mathbb R_+})=(0,\infty), \qquad \ri(\dom N_{\mathbb R_-}) \cap \ri(\dom N_{\mathbb R_+} )=\emptyset,
\]
the classical relative-interior qualification fails.  Meanwhile, since  $\dom N_{\mathbb R_-}\cap\dom N_{\mathbb R_+}=\{0\}\ne\emptyset$, Theorem~\ref{thm:preservation}(c) ensures that $N_{\mathbb R_-}+N_{\mathbb R_+}$ is maximal monotone and piecewise polyhedral, which can also be verified directly by observing that
\[
  \qquad N_{\mathbb R_-}(x)+N_{\mathbb R_+}(x)=N_{\{0\}}(x) \qquad \forall\, x\in\mathbb R.
\]
\end{example}

\section{Concluding Remarks}\label{sec:conclusion}

In this paper, we show that, for piecewise polyhedral mappings, the relative-interior constraint qualifications in the standard maximal monotonicity preservation rules can be replaced by the corresponding nonempty-intersection assumptions. This yields a simple calculus for generating new maximal monotone piecewise polyhedral mappings from known ones. 
Starting from known maximal monotone piecewise polyhedral mappings,
the projected graph-slice construction in \eqref{eq:projected},
argument restriction, linear pullback, and addition yield mappings
in the same class whenever the corresponding nonempty-intersection
condition holds. 
Our results therefore provide useful tools for analyzing polyhedral multifunctions arising from variational inequalities, normal-cone mappings, complementarity systems, and proximal-point or primal-dual schemes.

\section*{Declarations}

\noindent{\bf Conflicts of Interest } The authors have no competing interests to declare that are relevant to the content of this article.

\end{document}